\documentclass{article}

\usepackage{graphicx} 
\usepackage{amsthm}
\usepackage{amsmath}
\usepackage{amssymb}
\usepackage{mathrsfs}
\usepackage{tikz}
\usepackage{mathtools}
\usepackage{enumitem}
\usepackage{comment}
\usepackage[hidelinks]{hyperref}

\newcommand{\bb}{\mathbb}
\newcommand{\mc}{\mathcal}

\DeclareMathOperator{\conv}{conv}

\DeclareMathOperator{\ini}{in}
\DeclareMathOperator{\rint}{rint}

\numberwithin{equation}{section}

\theoremstyle{definition}

\newtheorem{thm}[equation]{Theorem}
\newtheorem{prop}[equation]{Proposition}
\newtheorem{lem}[equation]{Lemma}
\newtheorem{cor}[equation]{Corollary}
\newtheorem{conj}[equation]{Conjecture}

\newtheorem{rem}[equation]{Remark}
\newtheorem{open}[equation]{Open Problem}

\newtheorem{ques}[equation]{Question}

\setlist{nosep}

\title{
The Gr\"obner Version of 
\\ White's Conjecture is False}
\author{
Spencer Backman\footnote{
University of Vermont,
Department of Mathematics and Statistics,
Innovation Hall, E220 82 University Place,
Burlington, VT, USA, 05405,
\texttt{sbackman@uvm.edu}}
\and 
Nathan Cheung\footnote{
University of Washington,
Department of Mathematics,
Box 354350, C-138 Padelford,
Seattle, WA 98195-4350,
\texttt{ncheung@uw.edu}}
\and 
Micha{\l} Laso\'n\footnote{
Institute~of~Mathematics~of~the~Polish~Academy~of~Sciences,
ul.~\'Sniadeckich~8, 00-656~Warszawa, Poland,
\texttt{michalason@gmail.com}}
\and 
Gaku Liu\footnote{
University of Washington,
Department of Mathematics,
Box 354350, C-138 Padelford,
Seattle, WA 98195-4350,
\texttt{gakuliu@uw.edu}
}
\and 
Mateusz Micha{\l}ek
\footnote{University of Konstanz, Germany,
Fachbereich Mathematik und Statistik, Fach D 197,
D-78457 Konstanz, Germany,
\texttt{mateusz.michalek@uni-konstanz.de}}
}
\date{June 12, 2026}

\begin{document}
\maketitle
\begin{abstract}
We show that the toric ideal of the Fano matroid polytope does not have a quadratic Gr\"obner basis. This resolves in the negative a strong version of White's conjecture from matroid theory. This result was found independently by De Loera, Ferroni, Morales, and Rambau \cite{DeLoeraFerroniMoralesRambau2026}. Our approach is based on a new characterization of regular unimodular flag triangulations, which reduces the problem to an instance of SMT involving boolean and real variables. We then use an SMT solver to prove unsatisfiability. Using this approach, we also show that all 8-element matroids which do not have the Fano matroid or its dual as a minor, with the possible exception of the matroid $T_8$, have toric ideals which admit quadratic Gr\"obner bases.
\end{abstract}
\section{Introduction}

In this article we investigate algebraic and geometric aspects of the toric ideal $I_M$ of a matroid base polytope $P(M)$ for a given matroid $M$.  This is a classical topic at the intersection of combinatorics, algebraic geometry, commutative algebra, and polyhedral geometry.  There are many important results in area; here we briefly recall two:

\begin{itemize}
    \item  The closure of the torus orbit of any point $L$ in a Grassmannian is defined by the ideal $I_M$ for the matroid associated to $L$ \cite{gelfand1987combinatorial}.
    \item The matroid base polytope $P(M)$ is always normal \cite{white1977basis}.
\end{itemize}

Still, many properties of $P(M)$ and $I_M$ remain only conjectural. The famous White's conjecture predicts quadratic generation of $I_M$.\footnote{White presented three versions of the conjecture. Two of those are known to be equivalent \cite{lason2014toric}.}
\begin{conj}[White's conjecture]
    The ideal $I_M$ is generated in degree two.
\end{conj}
The conjecture is only known for specific classes of matroids \cite{Blasiak2008GraphicMatroidToricIdeal, Kashiwabara2010Rank3MatroidToricIdeal, Schweig2011LatticePathMatroids, Bonin2013SparsePavingWhite, lason2014toric, Shibata2016SeriesParallelMatroids, McGuinness2020FrameMatroidsWhite, BercziSchwarcz2024SplitMatroids, YuYuen2025PavingMatroidsWhite, HanMichalekWeigert2025WhiteInnerProjections}. It is also known that the quadrics define the same projective scheme as $I_M$ \cite{lason2014toric}.
Herzog and Hibi proposed the following strengthening of White's conjecture \cite{herzog2002discrete}.
\begin{ques}
    Does the ideal $I_M$ always have a quadratic Gr\"obner basis?
\end{ques}
The answer is positive in small cases, see  \cite{HayaseHibiKatsunoShibata2022}, as well as for the class of base-sortable matroids \cite{blum2001base}, which includes uniform matroids \cite{sturmfels1996grobner} and lattice path matroids \cite{Schweig2011LatticePathMatroids}. In particular, \cite{HayaseHibiKatsunoShibata2022} identifies the Fano matroid as the smallest matroid where the answer to the question was not known.  
The main result of this paper shows that when $M$ is the Fano matroid, then $I_M$ has no quadratic Gr\"obner basis. By a general result of Sturmfels \cite{sturmfels1996grobner}, the existence of a quadratic Gr\"obner basis for $I_M$ is equivalent to the existence of a regular, unimodular, flag triangulation, also known as a quadratic triangulation, of the polytope $P(M)$. 

Our approach is to prove that no quadratic triangulation exists for the Fano matroid. 
For this, we present in Theorem \ref{thm:main conditions} new necessary and sufficient conditions for the existence of quadratic triangulations. These conditions can be expressed as formal statements in Boolean and real variables. 
Using the SMT solver cvc5 \cite{cvc5} we obtain a formal proof that the conditions cannot be simultaneously satisfied.  Previous work of authors Backman and Liu proved that every matroid base polytope admits a regular, unimodular, but not necessarily flag, triangulation \cite{BackmanLiu2025MatroidBasePolytope}.  In light of the current work, the result of Backman and Liu is, in a sense, best possible.

Using our methods, we also show that the polytope of every matroid with 8 elements and no Fano or Fano dual minor has a quadratic triangulation, with the possible exception of one matroid called $T_8$. See Section 5 and Open Problem~\ref{T8}.

While preparing our preprint, the independent work \cite{DeLoeraFerroniMoralesRambau2026} appeared on arXiv, proving the same result. Their approach is to reduce the problem to an instance of Boolean satisfiability and use a SAT solver. In our approach, we reduce to a mixed satisfiability problem with Boolean and real variables, and use an SMT solver. It would be interesting to see if ideas from both can be used to produce more efficient algorithms; for example, to resolve whether $I_{T_8}$ has a quadratic Gr\"obner basis.

\section*{Acknowledgements}
 SB was supported by NSF Grant (DMS-2246967) and Simons Foundation Gift \# 854037. GL was supported by NSF grant DMS-2348785. ML was supported by the National Science Centre, Poland, grant no.~2019/34/E/ST1/00087. MM was supported by the DFG grant 580118961.

We thank Michael Joswig, Matt Larson, Igor Makhlin, Sam Payne, and Benjamin Schr\"oter  for helpful comments and conversations.  We thank J\"org Rambau for sharing an early version of TOPCOM 1.2.0c.

\section*{Tool and computational resource disclosure}

Programs cvc5, Macaulay2, Sage, Singular, and TOPCOM 1.2.0c.  were utilized in this project.  
ChatGPT was consulted during writing the Python script that generates the SMT-LIB file, although the final code was written by hand.  OpenAI's Codex and Anthropic's Claude were used for scripting the 8-element matroid pipeline described in Subsection \ref{subsec: 8 elem}.  We emphasize that these coding agents were primarily used for code generation and campaign implementation, but not for pipeline design with a single exception: a helpful, albeit unsolicited, suggestion was made by a coding agent which improved the QTV step of the pipeline.  Apart from this contribution, the mathematics in this article was entirely human generated and did not utilize AI.  The 8-element matroid computations were performed on the Vermont Advanced Computing Cluster.  

\section{Preliminaries}

\subsection{Affine configurations}

Let $\mc A$ be a subset of $\bb R^d$. Let $\bb R^{\mc A}$ be the set of all real-valued tuples $(\lambda_a)_{a \in \mc A}$ indexed by $\mc A$. An \emph{affine dependency} is any $(\lambda_a) \in \bb R^{\mc A}$ such that $\sum_{a \in \mc A} \lambda_a a = 0$ and $\sum_{a \in \mc A} \lambda_a = 0$. 

A \emph{simplex} of $\mc A$ is an affinely independent subset of $\mc A$. Given a simplex $\sigma$, we refer to the convex hull of $\sigma$ in $\bb R^d$, denoted $\conv \sigma$, as a \emph{geometric simplex}. Any point $x$ in $\conv \sigma$ can be written uniquely as $x = \sum_{a \in \sigma} \lambda_a a$ where $0 \le \lambda_a \le 1$ and $\sum_{a \in \sigma} \lambda_a = 1$. We call this the \emph{convex hull representation} of $x$ with respect to $\sigma$. The \emph{relative interior} of $\sigma$, denoted $\rint \sigma$, is the set of all $x \in \conv \sigma$ whose convex hull representation $x = \sum_{a \in \sigma} \lambda_a a$ satisfies $\lambda_a > 0$ for all $a$.

Let $h : \mc A \to \bb R$ be any function. For any simplex $\sigma$ of $\mc A$ and point $x \in \conv \sigma$, we define
\[
h(x,\sigma) := \sum_{a \in \sigma} \lambda_a h(a)
\]
where $x = \sum_{a \in \sigma} \lambda_a a$ is the convex hull representation of $x$ with respect to $\sigma$.

The \emph{support} of an affine dependency $(\lambda_a)$ is the set of all $a \in \mc A$ such that $\lambda_a \neq 0$. The \emph{signed support} of $(\lambda_a)$ is the ordered pair $(S_+,S_-)$ where $S_+$ (respectively, $S_-$) is the set of all $a \in \mc A$ such that $\lambda_a > 0$ (respectively, $\lambda_a < 0$).

An \emph{(affine) circuit} of $\mc A$ is a minimal affinely dependent subset of $\mc A$. In other words, a circuit is a subset of $\mc A$ which is affinely dependent, and any proper subset is affinely independent. For every circuit $C$, there is an affine dependency $(\lambda_a)$ whose support is $C$, and this dependency is unique up to multiplication by a nonzero number. We call the signed support $(C_+,C_-)$ of this $(\lambda_a)$ a \emph{signed circuit} (associated to $C$); every circuit has two associated signed circuits $(C_+,C_-)$ and $(C_-,C_+)$. The pair of numbers $(|C_+|,|C_-|)$ is called the \emph{signature} of the signed circuit. The set partition $\{C_+,C_-\}$ is the unique bipartition of $C$ such that the relative interiors of $C_+$ and $C_-$ intersect. By the minimality of circuits, we obtain the following proposition. 

\begin{prop}
If $(S_+,S_-)$ is the signed support of a nonzero affine dependency, then there is a signed circuit $(C_+,C_-)$ such that $C_+ \subset S_+$ and $C_- \subset S_-$.
\end{prop}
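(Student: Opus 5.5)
I will argue by induction on the size of the support, using the standard "elimination by subtracting a circuit" trick. Let $\lambda = (\lambda_a)$ be a nonzero affine dependency with signed support $(S_+,S_-)$ and support $S = S_+ \cup S_-$. First, $S_+$ and $S_-$ are both nonempty: $\sum_a \lambda_a = 0$ and $\lambda \neq 0$ force both positive and negative entries. Since $S$ supports a nonzero affine dependency, $S$ is affinely dependent. So $S$ contains a minimal affinely dependent subset, i.e.\ a circuit $C \subseteq S$. Let $\mu$ be the dependency supported on $C$, which is unique up to scaling.

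The key step is to rescale $\mu$ so that it can be subtracted from $\lambda$ without creating sign changes. Since $\mu$ is a nonzero affine dependency, it has a positive entry. Replacing $\mu$ by $-\mu$ if needed, I may assume some $a \in C$ has $\mu_a$ of the same sign as $\lambda_a$. I then compare the signed circuit $(C_+,C_-)$ of $\mu$ with $(S_+,S_-)$:
\begin{itemize}
\item If $C_+ \subseteq S_+$ and $C_- \subseteq S_-$, we are done.
\item Otherwise, set $t = \min\{\lambda_a/\mu_a : a \in C,\ \lambda_a\mu_a > 0\}$. This is positive and finite by the choice of sign. Form $\lambda' = \lambda - t\mu$.
\end{itemize}
For $a \in C$ with $\lambda_a\mu_a>0$, the entry $\lambda'_a$ lies between $0$ and $\lambda_a$. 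By the choice of $t$, at least one such entry becomes $0$. For $a \in C$ with $\lambda_a\mu_a<0$, the entry $\lambda'_a$ keeps the sign of $\lambda_a$ and grows in absolute value. Entries outside $C$ are unchanged. Hence the signed support of $\lambda'$ satisfies $S'_+ \subseteq S_+$ and $S'_- \subseteq S_-$, with $S'$ strictly smaller than $S$.

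If $\lambda' \neq 0$, induction on $|S|$ gives a signed circuit $(C'_+,C'_-)$ with $C'_\pm \subseteq S'_\pm \subseteq S_\pm$, finishing the proof. If $\lambda' = 0$, then $\lambda = t\mu$ with $t>0$. So $(S_+,S_-)$ is itself the signed circuit of $\mu$, and we are done directly. The base case is covered by the same argument, since the induction only ever decreases the support.

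The main obstacle is the bookkeeping in the subtraction step: I must check that no entry changes sign, only disappears, which is exactly what choosing $t$ as the minimum ratio over the sign-agreeing coordinates guarantees. An alternative I could use instead is a linear-programming/conformal decomposition argument, writing $\lambda$ as a conformal sum of circuit vectors. The elementary elimination above is self-contained and suffices.
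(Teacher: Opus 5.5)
Your proof is correct and is essentially the paper's argument. The paper justifies the proposition only with the phrase ``by the minimality of circuits,'' and your conformal elimination step is the standard way to make that precise: you subtract from $\lambda$ the largest multiple $t\mu$ of the dependency of a circuit $C\subseteq S$ that kills a sign-agreeing coordinate without flipping any sign, induct on the size of the support, and handle the sign bookkeeping correctly using $\lambda_a\neq 0$ and $\mu_a\neq 0$ for all $a\in C$.
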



\begin{cor} \label{rint}
Let $S$, $T \subset \mc A$. Then the relative interiors of $S$ and $T$ intersect if and only if there is a signed circuit $(C_+,C_-)$ with $C_+ \subset S$ and $C_- \subset T$.
\end{cor}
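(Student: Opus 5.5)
We read "the relative interiors of $S$ and $T$" as $\rint(\conv S)\cap \rint(\conv T)\neq\emptyset$ for arbitrary subsets $S,T\subset\mc A$, not necessarily simplices. Both directions reduce to translating between a common point of the two relative interiors and an affine dependency with the right sign pattern; the preceding proposition then extracts a circuit from the dependency.

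For the forward direction, take $x$ in both relative interiors. Any point of $\rint(\conv S)$ is a convex combination $x=\sum_{a\in S}\mu_a a$ with all $\mu_a>0$ and $\sum\mu_a=1$. To see this, average convex representations of finitely many points near $x$ that surround it, or write $x$ as a positive combination of a point of the form $\frac1{|S|}\sum_{a\in S} a$ and another point of $\conv S$, which is possible because $x$ lies in the relative interior. Similarly $x=\sum_{b\in T}\nu_b b$ with all $\nu_b>0$. The difference gives an affine dependency $\lambda=\mu-\nu$, extending $\mu$ and $\nu$ by zero; its coefficients sum to $1-1=0$. Its positive support lies in $S$ and its negative support lies in $T$. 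A coordinate in $S\cap T$ may cancel or take either sign, but it still lies in both $S$ and $T$, so the inclusions hold regardless. The main obstacle is showing that $\lambda\neq 0$. If $\lambda=0$ then $\mu=\nu$, so $S$ and $T$ share the support of $\mu$; but then $\mu$ itself has full support on $S$ and on $T$, which forces $S=T$. In the degenerate case $S=T$ with $|S|=1$ no nonzero dependency exists, and the statement must be read with that degenerate case excluded, i.e.\ for $S\neq T$ or disjoint $S,T$ as in the intended application. In the nondegenerate situation, including $S=T$ with $|S|\ge 2$ where one can choose two different positive representations if $S$ is dependent, the preceding proposition applied to $\lambda$ gives a signed circuit with $C_+\subset S_+\subset S$ and $C_-\subset S_-\subset T$.

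For the converse, let $(C_+,C_-)$ be a signed circuit with dependency $\lambda$. Normalizing so that $\sum_{C_+}\lambda_a=1=-\sum_{C_-}\lambda_a$ gives a point $y$ in $\rint C_+\cap\rint C_-$. Moving $y$ to the relative interiors of the larger sets $S$ and $T$ takes a perturbation: replace $y$ by $y'=(1-\epsilon)y+\epsilon z$. It is not immediate that the enlarged relative interiors still meet, and this is the real difficulty of this direction. If $S$ and $T$ are simplices, as in the paper's use, write $x_S=(1-\epsilon)y+\epsilon\,\bar S$ with $\bar S$ the barycenter of $S$, and define $x_T$ similarly. Then $x_S\in\rint S$ and $x_T\in\rint T$, but $x_S\neq x_T$ in general. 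I would therefore interpret the corollary as intended for $S,T$ with $S\cup T$ a simplex plus the circuit, or, more honestly, I expect the converse direction as stated to require exactly the paper's convention. I would check that convention, since for arbitrary $S,T$ the "if" direction can fail: take $C_\pm$ inside two segments meeting only at an endpoint-interior crossing while $S$ adds a far point.
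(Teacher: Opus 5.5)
Your forward argument is the intended one, and the paper gives no proof beyond the preceding proposition: write a common point as strictly positive convex combinations over $S$ and over $T$, subtract, and apply the proposition. Your treatment of $\lambda\neq 0$ is also right. However, you have misidentified the exceptional case. Since $\lambda=0$ forces $S=T$, the implication fails exactly when $S=T$ is a nonempty affinely independent set of any size, e.g.\ $S=T=\{a,b\}$ with $a\neq b$, not only when $|S|=1$. If $S=T$ is dependent, you simply take a circuit inside $S$. So what you have proved is the forward direction for all $S\neq T$. That is precisely what the paper uses: in the proof of Theorem~\ref{thm:main conditions} it is applied to distinct elements of the clique complex $\mathcal T'$.

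For the converse, stop looking for a convention that rescues it. The \emph{if} direction is false, even for disjoint simplices, so restricting to simplices as you suggest changes nothing.

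Your sketched example works once made precise. In $\mathbb R^2$ let $c=(-1,0)$, $d=(1,0)$, $e=(0,0)$, $p=(0,1)$. Then $(\{e\},\{c,d\})$ is a signed circuit with $\{e\}\subset S=\{e,p\}$ and $\{c,d\}\subset T=\{c,d\}$, yet the open segments $\rint S$ and $\rint T$ are disjoint.

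The converse fails even inside a matroid polytope. In $P(U_{2,4})$, with $e_{ij}=e_i+e_j$, the pair $(\{e_{12},e_{34}\},\{e_{13},e_{24}\})$ is a signed circuit. The triangles $S=\{e_{12},e_{34},e_{14}\}$ and $T=\{e_{13},e_{24},e_{23}\}$ meet only in the point $\frac12(1,1,1,1)$, which lies on an edge of each, so $\rint S\cap\rint T=\emptyset$.

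Your forward argument together with your normalization argument (giving $\rint C_+\cap\rint C_-\neq\emptyset$) proves the correct two-sided statement. A signed circuit with $C_+\subset S$ and $C_-\subset T$ exists if and only if there are disjoint $S'\subset S$, $T'\subset T$ with $\rint S'\cap\rint T'\neq\emptyset$. For simplices $S,T$ this says exactly $\conv S\cap\conv T\neq\conv(S\cap T)$. Nothing stronger is needed anywhere in the paper, since the other half of the proof of Theorem~\ref{thm:main conditions} only uses $\rint C_+\cap\rint C_-\neq\emptyset$ for the circuit itself.
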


\subsection{Subdivisions of affine configurations}

Let $\mc A$ be a finite subset of $\bb R^d$. A \emph{subdivision} of $\mc A$ is a set $\mc S$ of subsets of $\mc A$ such that
\begin{enumerate}
    \item If $S \in \mc S$ and $F$ is a face of $\conv S$, then $F \cap S \in \mc S$.
    \item If $S$ and $T$ are different elements of $\mc S$, then the relative interiors of $S$ and $T$ are disjoint.
    \item $\bigcup_{S \in \mc S} \conv S = \conv \mc A$.
\end{enumerate}
A triangulation is a subdivision all of whose elements are simplices.

Let $h : \mc A \to \bb R$ be a function. Define
\[
\mc A^h := \{ (a,h(a)) : a \in \mc A \} \subset \bb R^{d+1}.
\]
Let $P = \conv \mc A^h$. A \emph{lower face} of $P$ is a face which has an outward normal vector with negative last coordinate. Let $p : \bb R^{d+1} \to \bb R^d$ be the projection which deletes the last coordinate. Then the collection of sets
\[
\{ p(F \cap \mc A^h) : F \text{ is a lower face of } P \}
\]
is a subdivision of $\mc A$, which we call the subdivision \emph{induced} by $h$. A subdivision which is induced by a function is called \emph{regular}.

\begin{prop} \label{induced}
Let $h : \mc A \to \bb R$ be any function and $\mc S$ the subdivision induced by $h$. Let $\sigma$ be a simplex of $\mc A$. The following are equivalent.
\begin{enumerate}[label=(\alph*)]
    \item $\sigma \in \mc S$.
    \item For any $x \in \rint \sigma$ and all simplices $\tau \subset \mc A$ with $x \in \rint \tau$ and $\sigma \neq \tau$, we have $h(x,\sigma) < h(x,\tau)$.
    \item For some $x \in \rint \sigma$ and all simplices $\tau \subset \mc A$ with $x \in \rint \tau$ and $\sigma \neq \tau$, we have $h(x,\sigma) < h(x,\tau)$.
\end{enumerate}
\end{prop}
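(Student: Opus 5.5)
The plan is to prove (a)$\Rightarrow$(b)$\Rightarrow$(c)$\Rightarrow$(a). The middle implication is trivial because $\rint\sigma$ is nonempty. The whole argument rests on the lower envelope
\[
g(x) := \min\{ t : (x,t) \in P \}, \qquad x \in \conv \mc A,
\]
where $P=\conv\mc A^h$. This function $g$ is convex and piecewise linear. Its graph is the union of the lower faces of $P$, and $g$ is affine on $\conv S$ for each cell $S$ of the subdivision $\mc S$ induced by $h$.

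\textbf{Basic inequality.} For any simplex $\tau$ and any $x \in \conv\tau$, the point $(x, h(x,\tau))$ is a convex combination of the points $(a,h(a))$ with $a\in\tau$. It therefore lies in $P$, so $g(x) \le h(x,\tau)$.

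\textbf{(a)$\Rightarrow$(b).} Suppose $\sigma\in\mc S$, and let $F$ be the lower face with $p(F\cap\mc A^h)=\sigma$. Choose an outward normal to $F$ and normalize it to the form $(-c,1)\cdot(-1)$. This gives an affine function $\ell(y)=\langle c,y\rangle+b$ with two properties:
\begin{itemize}
\item $h(a)\ge \ell(a)$ for all $a\in\mc A$;
\item equality $h(a)=\ell(a)$ holds exactly for $a\in\sigma$.
\end{itemize}
Now take $x\in\rint\sigma$. Since $\ell$ is affine and agrees with $h$ on $\sigma$, we get $h(x,\sigma)=\ell(x)$.

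Next let $\tau\ne\sigma$ be a simplex with $x\in\rint\tau$, and write $x=\sum_{a\in\tau}\mu_a a$ with every $\mu_a>0$. Then
\[
h(x,\tau)=\sum_{a \in \tau} \mu_a h(a) \ge \sum_{a \in \tau} \mu_a \ell(a) = \ell(x),
\]
with equality only if $\tau\subset\sigma$. If $\tau\subsetneq\sigma$, then $\rint\tau$ is disjoint from $\rint\sigma$ by uniqueness of convex hull representations in $\sigma$. This contradicts $x\in\rint\tau$. Hence the inequality is strict.

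\textbf{(c)$\Rightarrow$(a), first step: $h(x,\sigma)=g(x)$.} Fix $x\in\rint\sigma$ as in (c). The point $(x,g(x))$ lies on a lower face $F$ of $P$. By Carathéodory applied within $F$, it lies in the relative interior of some affinely independent subset of $F\cap\mc A^h$. Projecting, we get a simplex $\tau$ with $x\in\rint\tau$ and $h(x,\tau)=g(x)$. By the basic inequality, $h(x,\sigma)\ge g(x)=h(x,\tau)$. If $\tau\ne\sigma$, this contradicts (c). So $\tau=\sigma$ and $h(x,\sigma)=g(x)$.

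\textbf{(c)$\Rightarrow$(a), second step: $\sigma$ is a cell.} This is the main obstacle. Let $F$ be the smallest face of $P$ containing $(x,g(x))$. It is a lower face, because $(x,g(x))$ is on the lower boundary. The point $(x,g(x))$ lies in the relative interior of the simplex $\sigma^h$, which is contained in $P$, and so the whole simplex $\sigma^h$ lies in $F$.

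I claim $F\cap\mc A^h=\sigma^h$. Suppose instead that some $(b,h(b))\in F$ with $b\notin\sigma$. Since $(x,g(x))\in\rint F$, it can be written as a convex combination of points of $F\cap\mc A^h$ in which $b$ has positive weight. Choose such a representation whose support $S$ is minimal subject to containing $b$. If $S$ is affinely independent, it gives a simplex $\tau\ni b$ with $x\in\rint\tau$ and $h(x,\tau)=g(x)=h(x,\sigma)$. This violates (c), since $\tau\ne\sigma$.

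If $S$ is dependent, I would use the signed circuit decomposition (Proposition before Corollary~\ref{rint}). Since $F$ is a face, the lifted points of $S$ are in a common affine hyperplane. So any affine dependency among the points of $S$ also lifts to the points $(a,h(a))$. One can then perturb the representation along a circuit, in the standard Carathéodory-style move, to drop some element other than $b$. The delicate point is ensuring $b$ is never the one forced out. This is handled by mixing with the representation via $\sigma$, where $b$ has weight zero: convex combinations of the two representations keep $x$ fixed and can be arranged to keep $b$ positive.

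Carrying out this reduction produces an affinely independent $\tau\ni b$ with $x\in\rint\tau$ and $h(x,\tau)=g(x)$. This contradicts (c). Therefore $p(F\cap\mc A^h)=\sigma$, and so $\sigma\in\mc S$.
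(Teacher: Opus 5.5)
Your proof is correct and follows essentially the paper's route. In (a)$\Rightarrow$(b) you use the supporting affine function. In (c)$\Rightarrow$(a) you take the minimal lower face over $x$, show via (c) that every simplex in it carrying $x$ in its relative interior must be $\sigma$, and rule out extra vertices by a Carath\'eodory argument; the paper phrases this last step as ``a non-simplex would contain two distinct such $\tau$'', without proof.

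The step you call delicate is actually routine. Orient the dependency $\mu$ on $S$ so that $\mu_b \ge 0$ and move along $+\mu$. Some coordinate with $\mu_a<0$, necessarily $a\neq b$, vanishes first, which contradicts the minimality of $S$. So the proposed mixing with the $\sigma$-representation is unnecessary, and it would not shrink the support anyway: it only enlarges it to $S\cup\sigma$.
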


\begin{proof}
(a) $\implies$ (b). Assume $\sigma \in \mc S$. Then there is a linear functional $(\phi,-1) \in (\bb R^d)^\ast \times \bb R$ and real number $b$ such that $(\phi,-1)(a,h(a)) = b$ for all $a \in \sigma$ and $(\phi,-1)(a,h(a)) < b$ for all $a \in \mc A \setminus \sigma$. Let $x \in \rint \sigma$, and write $x = \sum_{a \in \sigma} \lambda_a a$ where $\lambda_a > 0$ for all $a \in \sigma$ and $\sum_{a \in \sigma} \lambda_a = 1$. We have
\[
\sum_{a \in \sigma} \lambda_a (\phi,-1)(a,h(a)) = b \implies \phi(x) - h(x,\sigma) = b \implies h(x,\sigma) = \phi(x) - b.
\]
Consider any simplex $\tau \subset \mc A$ with $x \in \rint \tau$ and $\sigma \neq \tau$. In particular, at least one element of $\tau$ is not in $\sigma$. Write $x = \sum_{a \in \tau} \lambda'_a a$ where $\lambda'_a > 0$ for all $a \in \tau$ and $\sum_{a \in \tau} \lambda'_a = 1$. We have
\[
\sum_{a \in \tau} \lambda'_a (\phi,-1)(a,h(a)) < b \implies \phi(x) - h(x,\tau) < b \implies h(x,\tau) > \phi(x) - b.
\]

(b) $\implies$ (c) is straightforward.

(c) $\implies$ (a). There exists a lower face $F$ of $\conv(\mc A^h)$ such that $x\in \rint p(F\cap \mc A^h)$. Fix a linear functional $(\phi,-1) \in (\bb R^d)^\ast \times \bb R$ and real number $b$ such that $(\phi,-1)(a,h(a)) = b$ for all $a \in p(F\cap \mc A^h)$ and $(\phi,-1)(a,h(a)) < b$ for all $a \in \mc A \setminus p(F\cap \mc A^h)$. Take $\tau$ to be any simplex contained in $p(F\cap \mc A^h)$ such that $x\in \rint \tau$. By the same argument as in the first implication, we see that $h(x,\sigma)\geq h(x,\tau)$ and thus, by assumption (c) we have $\sigma=\tau$. If $p(F\cap \mc A^h)$ is not a simplex, then there would exist two distinct $\tau$ in $p(F\cap \mc A^h)$ with $x\in \rint \tau$, which is not possible since they all must equal $\sigma$. Thus $p(F\cap \mc A^h)$ equals $\sigma$ and $\sigma\in \mc S$.
\end{proof}

Let $\mc T$ be a triangulation of $\mc A$. A \emph{non-face} of $\mc T$ is a subset of $\mc A$ which is not in $\mc T$. We say $\mc T$ is \emph{flag} if all its minimal non-faces have cardinality at most 2. In other words, $\mc T$ is flag if for every subset $S \subset \mc A$ which is not in $\mc T$ and $|S| > 2$, there is some $S' \subset S$ which is not in $\mc T$ with $|S'| = 2$.

Now assume $\mc A \subset \bb Z^d$. A simplex $\sigma$ is \emph{unimodular} if there is an affine lattice isomorphism $\phi : \bb Z^d \to \bb Z^d$ such that $\phi(\sigma) = \{0,e_1,\dots,e_k\}$ for some $k \ge 0$, where $e_i$ is a standard basis vector. A triangulation is \emph{unimodular} if all of its elements are unimodular.

A \emph{quadratic triangulation} is a triangulation which is regular, flag, and unimodular.

\subsection{Gr\"{o}bner bases and toric ideals}\label{subsec: gb and tor ideal}

Fix a field $\bb K$, and let $R = \bb K[t_1,\dots,t_n]$ be a polynomial ring. A \emph{monomial order} is a total order $\prec$ on the set of monomials $M$ of $R$ such that $u \prec v$ implies $uw \prec vw$ for all $w \in M$, and $1 \prec u$ for all $u \in M \setminus \{1\}$. Fix some monomial order $\prec$. For any polynomial $f \in R$, the \emph{leading term} of $f$ is the largest monomial appearing in $f$ with respect to $\prec$. For any ideal $I$ of $R$, the \emph{initial ideal} $\ini_\prec I$ is the ideal generated by all leading terms of elements of $I$. A \emph{Gr\"obner basis} for $I$ is a finite set $G \subset I$ such that the leading terms of elements of $G$ generate $\ini_\prec I$.

Let $h : [n] \to \bb R_{>0}$ be a function such that $h(1)$, \dots, $h(n)$ are linearly independent over $\bb Q$. For a monomial $u = t_1^{c_1} \dots t_n^{c_n} \in M$, define $h(u) = \sum_{i=1}^n h(i) c_i$. Define the relation $\prec_h$ in $M$ by $u \prec_h v$ if $h(u) < h(v)$. Then $\prec_h$ is a monomial order on $M$.

\begin{prop}[{\cite[Proposition~1.11]{sturmfels1996grobner}}]
For every monomial order $\prec$ and ideal $I$, there exists $h : [n] \to \bb R_{>0}$ such that $\ini_{\prec} I = \ini_{\prec_h} I$.
\end{prop}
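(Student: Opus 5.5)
The plan is to reduce to the case where only finitely many monomials matter, and then realize $\prec$ on those monomials by a weight vector. By Hilbert's basis theorem (equivalently, Dickson's lemma), $\ini_\prec I$ is finitely generated. So there is a finite Gröbner basis $G=\{g_1,\dots,g_s\}$ of $I$ with respect to $\prec$. We may take it reduced, so that each $g_i$ has leading monomial $\ini_\prec(g_i)$ and no other monomial of $g_i$ lies in $\ini_\prec I$.

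The key claim is the following. Suppose $h$ satisfies $\ini_{\prec_h}(g_i)=\ini_\prec(g_i)$ for every $i$, that is, $h(\ini_\prec g_i) > h(m)$ for every other monomial $m$ of $g_i$. Then $\ini_{\prec_h} I = \ini_\prec I$. To see this, first note $\ini_\prec I=\langle \ini_\prec g_i\rangle \subseteq \ini_{\prec_h} I$. For the reverse inclusion, we use the standard fact that any two initial ideals of $I$ (for genuine monomial orders) have the same set of standard monomials as a $\bb K$-basis of $R/I$. Hence neither can strictly contain the other: if $J\subsetneq J'$ were both initial ideals, then some monomial would be standard for $J$ but not for $J'$. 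Because $\prec_h$ is a total monomial order when the $h(i)$ are $\bb Q$-independent and positive, this comparison applies, and equality follows.

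It therefore remains to find such an $h$. The constraints are finitely many strict linear inequalities $\langle a-b,h\rangle>0$, where $t^a=\ini_\prec g_i$ and $t^b$ ranges over the other monomials of $g_i$. We add the constraints $h(i)>0$, equivalently $\langle e_i,h\rangle>0$ with $t_i\succ 1$. The main step is showing that this open cone is nonempty. Suppose it were empty. By Gordan's alternative there would be nonnegative rationals, not all zero, with $\sum c_{j}(a_j-b_j)+\sum d_i e_i=0$. Clearing denominators gives integers. Multiplying the true relations $t^{a_j}\succ t^{b_j}$ and $t_i\succ 1$ (using multiplicativity of $\prec$ and transitivity) then yields a monomial $u$ with $u\succ u$, a contradiction. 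So the cone is a nonempty open set. Because the set of $h$ whose coordinates are $\bb Q$-linearly dependent is a countable union of proper hyperplanes, we can perturb $h$ inside the cone to have $\bb Q$-independent coordinates.

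The expected obstacle is the Gordan step, specifically making the strictness work. At least one coefficient is positive, and each positive coefficient contributes a strict relation, so the product comparison is strict; this must be checked. One must also handle the case where all $c_j$ vanish, in which $\sum d_i e_i=0$ forces all $d_i=0$, contradicting that not all coefficients are zero. The remaining steps, Dickson's lemma and the standard-monomial basis theorem, are standard.
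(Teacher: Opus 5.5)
Your proof is correct. It is essentially the standard argument of the cited source (Sturmfels, Proposition~1.11), which the paper invokes without proof: take a Gr\"obner basis, use Farkas/Gordan together with multiplicativity of $\prec$ to show that the open cone of compatible weights is nonempty, and conclude $\ini_\prec I=\ini_{\prec_h} I$ because the standard monomials of each initial ideal form a basis of $R/I$. Your extra step of perturbing $h$ inside the open cone to obtain positive, $\mathbb{Q}$-linearly independent coordinates is exactly what the paper's definition of $\prec_h$ requires, since Sturmfels states the result for a nonnegative integer weight vector $\omega$ and the weight initial ideal $\mathrm{in}_\omega(I)$.
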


Let $\mc A$ be a finite subset of $\bb Z^d$. Let $R_{\mc A} := \bb K[ t_a : a \in \mc A]$ be the ring of polynomials generated by variables $t_a$ indexed by $\mc A$. 
The \emph{toric ideal} $I_{\mc A}$ is the ideal of $R_{\mc A}$ generated by all binomials of the form
\[
\prod_{a \in S_+} t_a^{\lambda_a} - \prod_{a \in S_-} t_a^{-\lambda_a} 
\]
where $(\lambda_a)_{a \in \mc A}$ ranges over all affine dependencies of $\mc A$ with $\lambda_a \in \bb Z$ for all $a$, and $(S_+,S_-)$ is the signed support of $(\lambda_a)_{a \in \mc A}$.

Let $h : \mc A \to \bb R_{>0}$ be any function. If $h$ is generic, then it induces a monomial order $\prec_h$ on $R_{\mc A}$ as above. In addition, $h$ induces a triangulation $\mc T$ of $\mc A$. Let $I_{\mc T}$ be the ideal of $R_{\mc A}$ generated by all monomials of the form $\prod_{a \in N} t_a$, where $N$ is a non-face of $\mc T$. This is also known as the \emph{Stanley-Reisner ideal} of $\mc T$.

\begin{thm}[{\cite[Theorem~8.3 and Corollary~8.9]{sturmfels1996grobner}}] \label{groebner}
We have
\[
\sqrt{ \ini_{\prec_h} I_{\mc A}} = I_{\mc T}.
\]
In addition, $\ini_{\prec_h} I_{\mc A}$ is generated by squarefree monomials if and only if $\mc T$ is unimodular.
\end{thm}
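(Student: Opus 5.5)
The plan is to prove both parts by identifying the standard monomials of $\ini_{\prec_h} I_{\mc A}$. I work with the homogenized points $(a,1)$, so that affine dependencies become linear relations among the lifted vectors. The basic input is the standard fact that a toric ideal is spanned, as a vector space, by the binomials $t^u - t^v$ with $\sum u_a(a,1) = \sum v_a(a,1)$. Consequently $\ini_{\prec_h} I_{\mc A}$ is spanned by the leading monomials of such binomials. So a monomial $t^u$ lies in the initial ideal if and only if there is $v \neq u$ representing the same lifted point with $h(v) < h(u)$.

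\textbf{Step 1: monomials supported on faces are standard.} Let $t^u$ have support inside a simplex $\sigma \in \mc T$. Choose a functional $(\phi,-1)$ and a constant $b$ as in the proof of Proposition~\ref{induced}, with value $b$ on the lift of $\sigma$ and value $<b$ on the lifts of the other points of $\mc A$. For any $v$ with the same lifted point and degree, summing gives $h(v) \ge h(u)$. Equality holds only if $v$ is supported on $\sigma$. Since $\sigma$ is affinely independent, equality then forces $v = u$. Hence $t^u$ is standard.

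\textbf{Step 2: some power of a non-face monomial is non-standard.} Now let $N$ be a non-face and let $x$ be the barycenter of $N$. The point $x$ lies in $\conv\mc A$, so it lies in $\rint\sigma$ for a unique cell $\sigma$ of $\mc T$. Since $N \not\subset \sigma$, the lifted barycenter of $N$ lies strictly above the lower hull over $x$. Equivalently, by Proposition~\ref{induced}, $h(x,\sigma)$ is strictly smaller than the average of $h$ over $N$. The convex hull coordinates of $x$ with respect to $\sigma$ are rational. Clearing denominators by a suitable $k$, the monomial $\prod_{a\in N} t_a^{k}$ equals, in the toric ring, a monomial supported on $\sigma$ of strictly smaller weight. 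So this power lies in the initial ideal.

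\textbf{Conclusion of the first part.} Step 2 shows $I_{\mc T} \subset \sqrt{\ini_{\prec_h} I_{\mc A}}$. Step 1 shows that every monomial outside $I_{\mc T}$ is standard, and so are all its powers. Both ideals are monomial and $I_{\mc T}$ is radical, so $\sqrt{\ini_{\prec_h} I_{\mc A}} = I_{\mc T}$.

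\textbf{Unimodular implies squarefree.} Assume $\mc T$ is unimodular and let $N$ be a non-face. Consider the lattice point $\sum_{a\in N}(a,1)$, which lies in the cone over the lift of the cell $\sigma$ from Step 2. A unimodular simplex spans a cone whose lattice points are exactly the $\bb N$-combinations of its generators. So no clearing of denominators is needed: $t^N$ itself is equivalent to a monomial on $\sigma$ of smaller weight. Thus every minimal non-face monomial is already in the initial ideal, so $I_{\mc T} \subset \ini_{\prec_h} I_{\mc A}$. Together with the first part this gives $\ini_{\prec_h} I_{\mc A} = I_{\mc T}$, which is squarefree.

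\textbf{Squarefree implies unimodular.} Conversely, suppose the initial ideal is squarefree. It is then radical, so by the first part it equals $I_{\mc T}$. I would then compare Hilbert polynomials; $R/\ini$ and $R/I_{\mc A}$ share a Hilbert function.
\begin{itemize}
\item The leading coefficient for $R/I_{\mc T}$ is $(\#\text{maximal simplices of }\mc T)/\dim!$.
\item The leading coefficient for $R/I_{\mc A}$ counts lattice points of $k\conv\mc A$ asymptotically. Here I measure volume with respect to the lattice generated by $\mc A$, the convention under which the statement is meant. This gives the normalized volume of $\conv\mc A$ divided by $\dim!$.
\end{itemize}
The normalized volume is the sum of the normalized volumes of the maximal simplices, and each is at least $1$. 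Equality of the two leading coefficients therefore forces every maximal simplex to have normalized volume $1$, i.e.\ to be unimodular.

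\textbf{Expected main obstacle.} I expect the delicate point to be exactly this lattice normalization in the last step. Unimodularity must be read relative to the lattice $\bb Z\mc A$; with respect to $\bb Z^d$ the statement can fail when $\mc A$ does not generate $\bb Z^d$. The asymptotic lattice-point count must also be matched with the Hilbert function of the toric ring, which counts only points of the semigroup $\bb N\mc A$. I would handle this by noting that the saturation of $\bb N\mc A$ differs from it only in a set of lower-dimensional growth, so the leading coefficients agree.
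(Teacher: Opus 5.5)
Your argument is correct. The paper gives no proof of this statement; it only cites Sturmfels, so the natural comparison is with the argument there.

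Your Steps~1 and~2 are essentially the standard proof of the first part. A monomial is standard if and only if it is the unique $h$-minimal point of its fibre. Monomials supported on a cell are minimal because of the supporting functional of the lower face. A non-face monomial loses, after clearing denominators, to the cell containing its barycenter. One small slip: Proposition~\ref{induced} does not literally apply in Step~2, since $N$ need not be affinely independent. The functional from Step~1 nevertheless gives $\frac{1}{|N|}\sum_{a\in N}h(a)>h(x,\sigma)$ directly.

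For the second part, the treatment in Sturmfels' Chapter~8 goes through a local statement: the multiplicity of $\ini_{\prec_h} I_{\mc A}$ along the prime $\langle t_a : a\notin\sigma\rangle$ of a maximal cell $\sigma$ equals the normalized volume of $\sigma$. You use two different tools instead:
\begin{enumerate}
\item a direct lattice-point argument for ``unimodular $\Rightarrow$ squarefree'': in a unimodular cone, $\sum_{a\in N}(a,1)$ is an $\bb N$-combination of the generators, so $t^N$ itself is a leading term;
\item a global degree count for the converse: the degree of $I_{\mc A}$ is the normalized volume, and the degree of $I_{\mc T}$ is the number of maximal cells.
\end{enumerate}
Your route is shorter and needs only Macaulay's theorem and the Ehrhart-type asymptotics. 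The local version says more: it identifies, via the primary decomposition of the initial ideal, exactly which cells fail to be unimodular.

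Your lattice caveat is right, but state it precisely. With the paper's $\bb Z^d$-based definition, ``squarefree $\Rightarrow$ unimodular'' fails exactly when the affine lattice generated by $\mc A$ is a proper sublattice of $\mathrm{aff}(\mc A)\cap\bb Z^d$. ``$\mc A$ does not generate $\bb Z^d$'' is not the right condition, since lower-dimensional configurations never generate $\bb Z^d$ yet cause no trouble when saturated. For example, $\mc A=\{(0,0,0),(1,1,0),(1,0,1),(0,1,1)\}$ has $I_{\mc A}=0$, but it is a simplex of normalized volume $2$ in $\bb Z^3$. The implication ``unimodular $\Rightarrow$ squarefree'' holds under either convention.

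The failing direction is exactly the one invoked in the converse of Theorem~\ref{thm:main conditions}. The same example satisfies conditions (1) and (2) vacuously, so that theorem also needs the $\bb Z\mc A$ reading for general subsets of $\{0,1\}^d$. For matroid base polytopes nothing changes, because their vertices affinely generate all lattice points of their affine hull.
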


\begin{cor}
$I_{\mc A}$ has a quadratic Gr\"obner basis with squarefree initial ideal if and only if $\mc A$ has a quadratic triangulation. If $\mc A$ is in convex position, then $I_{\mc A}$ has a quadratic Gr\"obner basis if and only if $\mc A$ has a quadratic triangulation.
\end{cor}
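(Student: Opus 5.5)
We prove the two statements of the corollary in turn, both via Theorem~\ref{groebner} together with Sturmfels' Proposition~1.11, which lets us replace an arbitrary monomial order $\prec$ by a weight order $\prec_h$ for a generic $h:\mc A\to\bb R_{>0}$ with $\ini_\prec I_{\mc A}=\ini_{\prec_h} I_{\mc A}$. Throughout, let $\mc T$ be the regular triangulation induced by $h$. Conversely, every regular triangulation arises from some generic positive $h$: perturb the height function slightly and add a constant, which changes neither the induced triangulation nor its regularity.

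\emph{First statement, ($\Rightarrow$).} Suppose $I_{\mc A}$ has a quadratic Gr\"obner basis with respect to $\prec$ and the initial ideal is squarefree. Pass to $\prec_h$. Since $\ini_{\prec_h} I_{\mc A}$ is generated by squarefree monomials, Theorem~\ref{groebner} says $\mc T$ is unimodular. A squarefree monomial ideal is radical, so $\ini_{\prec_h} I_{\mc A}=I_{\mc T}$. The initial ideal is generated by quadrics, i.e.\ by degree-$\le 2$ squarefree monomials. The minimal generators of $I_{\mc T}$ are exactly the monomials $\prod_{a\in N}t_a$ for minimal non-faces $N$, so every minimal non-face has size at most $2$, and $\mc T$ is flag. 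It is regular by construction, hence quadratic.

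\emph{First statement, ($\Leftarrow$).} Given a quadratic $\mc T$, choose a generic positive $h$ inducing it. By Theorem~\ref{groebner}, unimodularity makes $\ini_{\prec_h}I_{\mc A}$ squarefree, hence equal to its radical $I_{\mc T}$. Flagness means $I_{\mc T}$ is generated by squarefree monomials of degree $\le 2$. Degree-1 generators would correspond to points of $\mc A$ not used by $\mc T$; these do no harm, since a linear form is a monomial of degree $\le 2$. If one insists on degree exactly two, note that a variable $t_a$ lies in the initial ideal only via a binomial $t_a - m$ with $\deg m=1$, which forces a repeated point, and we assume $\mc A$ is a set. Lifting each monomial generator to an element of $I_{\mc A}$ with that leading term gives a Gr\"obner basis. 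Since $I_{\mc A}$ is homogeneous in the grading $\deg t_a=1$ (affine dependencies have $\sum\lambda_a=0$), each lift can be chosen homogeneous of the same degree, so the basis is quadratic.

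\emph{Second statement.} Assume $\mc A$ is in convex position. The direction ($\Leftarrow$) follows from the first statement. For ($\Rightarrow$), suppose $\ini_{\prec_h}I_{\mc A}$ is generated by quadrics. The main obstacle is to show these quadrics are squarefree, i.e.\ that no $t_a^2$ lies in the initial ideal. I would argue by contradiction. If $t_a^2\in\ini I_{\mc A}$, then, since the initial ideal of a toric ideal is spanned by leading terms of binomials, there is a binomial $t_a^2-t_bt_c$ in $I_{\mc A}$ with $t_a^2$ leading and $t_a^2\neq t_bt_c$. This gives $2a=b+c$, so $a$ is the midpoint of $b$ and $c$ with $b\neq c$. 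That contradicts convex position, since $a$ would then not be a vertex of $\conv\mc A$. Hence the initial ideal is squarefree, and the first statement applies.
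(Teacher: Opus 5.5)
Your proof is correct, and it is the derivation the paper leaves implicit. The paper states this corollary without proof, as an immediate consequence of Theorem~\ref{groebner} together with Sturmfels' Proposition~1.11. Your observation that a leading monomial $t_a^2$ would force a binomial $t_a^2-t_bt_c$ with $2a=b+c$ and $b\neq c$ is exactly why convex position lets one drop the squarefree hypothesis.
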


\subsection{Matroid polytopes}\label{subsec:mat polytopes}

Let $M = (E,\mc B)$ be a matroid with ground set $E$ and set of bases $\mc B$. For each $B \in \mc B$, let $e_B$ be the indicator vector of $B$ in $\bb R^E$, that is $e_B := \sum_{i \in B} e_i$ where $e_i$ is a standard basis vector of $\bb R^E$. Let $P(M) = \{e_B : B \in \mc B\}$. The polytope with vertex set $P(M)$ is the \emph{matroid (base) polytope} of $M$. (Since every element of $P(M)$ has 0-1 coordinates, $P(M)$ is in convex position.) By abuse of notation, when it does not lead to confusion, we will also denote by $P(M)$ the matroid base polytope. 

\begin{conj} \label{mainconjecture}
For every matroid $M$, the toric ideal $I_{P(M)}$ has a quadratic Gr\"obner basis. In other words, $P(M)$ has a quadratic triangulation.
\end{conj}

Let $F_7$ be the matroid represented by the set of nonzero elements of $\bb F_2^3$. This is known as the \emph{Fano matroid}. Its matroid polytope is 6-dimensional with 28 vertices. Our main result is the following.

\begin{thm}\label{thm: main fano}
$P(F_7)$ does not have a quadratic triangulation. Thus, Conjecture~\ref{mainconjecture} is false.
\end{thm}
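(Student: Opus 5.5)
The plan is to reduce the existence of a quadratic triangulation of $\mc A = P(F_7)$ to a finite satisfiability problem in Boolean and real variables, and then certify infeasibility with an SMT solver. For every pair $\{a,b\}$ of the 28 vertices, introduce a Boolean variable $x_{ab}$ meaning ``$\{a,b\}$ is an edge of $\mc T$''. For every vertex $a$, introduce a real variable $h(a)$ for a height function inducing $\mc T$. Flagness means $\mc T$ is the clique complex of its edge graph, so $\mc T$ is determined by the $x_{ab}$. It therefore suffices to write constraints that hold exactly when this clique complex is a regular unimodular triangulation induced by $h$.

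The constraints I would impose are the following.
\begin{enumerate}[label=(\roman*)]
    \item \emph{Non-crossing.} For every signed circuit $(C_+,C_-)$ of $\mc A$, the sets $C_+$ and $C_-$ cannot both be cliques. By Corollary~\ref{rint}, this forces the relative interiors of distinct faces to be disjoint.
    \item \emph{Unimodularity.} Every affinely independent set of 7 vertices whose simplex is not unimodular (determinant $\neq \pm1$) must contain a non-edge. Since $P(M)$ is a 0/1 polytope, this amounts to forbidding a finite list of cliques.
    \item \emph{Regularity.} Whenever $\{a,b\}$ is an edge and $x$ is the midpoint of $a$ and $b$, we require $h(x,\{a,b\}) < h(x,\tau)$ for every other simplex $\tau$ with $x\in\rint\tau$, following Proposition~\ref{induced}. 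Conversely, a non-edge must be beaten by some competing simplex.
    \item \emph{Covering.} The facets must cover $\conv\mc A$. Using unimodularity, I would encode this as a normalized-volume count: the number of 7-cliques equals the normalized volume of $P(F_7)$. Alternatively, I would require every interior codimension-one face to lie in exactly two facets.
\end{enumerate}

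The theorem will be stated as an equivalence (to appear as Theorem~\ref{thm:main conditions}). The key simplification is that all constraints involve only edges and circuits, which keeps the encoding small. Once the equivalence is proved, the Fano-specific work is to enumerate the circuits of $\mc A$, whose signatures are only $(2,2)$, $(2,3)$, $(3,3)$ and similar, and the relevant simplices. I would also use the symmetry group $GL_3(\bb F_2)$, together with the symmetry $B\mapsto$ complement composed with a duality, to break symmetry. For instance, fix the orientation of one square circuit, since every $(2,2)$ circuit needs exactly one diagonal. The resulting SMT-LIB instance is then passed to cvc5, which is run to obtain unsatisfiability, ideally with a checkable proof certificate.

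I expect the main obstacle to be proving that the local conditions suffice: that a $0/1$ edge assignment and heights satisfying only pairwise and circuit-type constraints really yield a triangulation covering the polytope. The covering condition in particular is global. My first attempt would be to combine the regularity inequalities with Proposition~\ref{induced}. The subdivision induced by $h$ exists and covers $\conv\mc A$; the constraints on edges force each of its edges to match $x_{ab}$; and non-crossing together with unimodularity forces its cells to be exactly the cliques. The remaining danger is computational, namely solver time; symmetry breaking and restricting the regularity constraints to the circuits of the given signatures should keep the problem tractable.

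For the final sentence of the theorem, note that $F_7$ is a matroid and $P(F_7)$ is in convex position. By the corollary to Theorem~\ref{groebner}, the absence of a quadratic triangulation means $I_{P(F_7)}$ has no quadratic Gr\"obner basis, so Conjecture~\ref{mainconjecture} is false.
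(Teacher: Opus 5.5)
Your plan is sound, and its framework is the paper's. Boolean edge variables determine a flag triangulation as a clique complex. There are real heights, one clause per signed circuit forbidding both $C_+$ and $C_-$ from being cliques, strict height inequalities at midpoints from Proposition~\ref{induced}, and cvc5 certifies unsatisfiability.

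The difference is in the characterization you encode. The paper's Theorem~\ref{thm:main conditions} exploits that $P(F_7)\subset\{0,1\}^7$. In a unimodular triangulation, the cell containing $(a+b)/2$ in its relative interior must be an edge with that midpoint. So the edge set is exactly a transversal $\mathcal E'$ of the classes $a+b=a'+b'$. Regularity then only needs comparison against the other pairs in the same class. Neither covering nor unimodularity has to be encoded: a perturbed $h$ induces a triangulation whose edges all lie in $\mathcal E'$, hence it equals the clique complex. The initial ideal is then $\langle t_at_b : \{a,b\}\notin\mathcal E'\rangle$, which is squarefree, so Theorem~\ref{groebner} gives unimodularity for free. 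The result is a small instance: 378 Booleans, 28 reals, the 181734 circuit clauses, the within-class implications, and one ``at least one'' clause per class.

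Your constraints are each implied by a quadratic triangulation, so UNSAT would prove the theorem. They are:
\begin{itemize}
\item explicit exclusion of nonunimodular 7-cliques;
\item a global covering constraint (a cardinality count over up to $\binom{28}{7}$ candidate facets, or ridge counting);
\item comparison against every simplex whose relative interior contains a midpoint;
\item the disjunctive ``non-edge is beaten'' clause.
\end{itemize}
Since together they also characterize quadratic triangulations exactly, your instance is equisatisfiable with the paper's. But it is far larger. Your own last paragraph, combined with the ``beaten'' clause, already shows that (iv) is redundant.

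Two corrections. First, the obstacle you single out, sufficiency, is irrelevant for this theorem. Deducing nonexistence from UNSAT only needs each constraint to be \emph{necessary}, which is immediate for (i)--(iv). Sufficiency matters only when turning SAT witnesses into triangulations, as in Section~\ref{subsec: 8 elem}.

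Second, and more seriously, your symmetry breaking as stated would invalidate the certificate. $F_7$ has rank $3$ and $F_7^*$ has rank $4$, so $B\mapsto E\setminus B$ sends $P(F_7)$ to $P(F_7^*)$, not to itself; there is no such extra symmetry to quotient by. It is also false that every $(2,2)$ circuit has exactly one diagonal in the triangulation. In each of the seven $6$-element classes of disjoint basis pairs, only one of the six pairs is an edge, so ten of the fifteen squares in such a class have neither diagonal. Fixing a diagonal is safe only for a square from a $2$-element class, and only after checking that some element of $GL_3(\bb F_2)$ swaps its two pairs.
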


\section{Conditions}

Let $\mc A$ be a subset of $\{0,1\}^d$. Let $\mc E = \binom{\mc A}{2}$, that is, the set of all 2-element subsets of $\mc A$. We define an equivalence relation $\sim$ on $\mc E$ by
\[
\{a,b\} \sim \{a',b'\} \qquad \text{if} \qquad a + b = a' + b'.
\]
A \emph{transversal} of $\sim$ is a set $\mc E' \subset \mc E$ containing exactly one representative of each equivalence class of $\sim$. The key result is the following.

\begin{thm}\label{thm:main conditions}
Let $\mc A$, $\mc E$, and $\sim$ be as above. Then $\mc A$ has a quadratic triangulation if and only if there exists a transversal $\mc E'$ of $\sim$ and a function $h : \mc A \to \bb R$ such that
\begin{enumerate}[label=(\arabic*)]
    \item \label{cond1} For every $\{a',b'\} \in \mc E'$ and every $\{a,b\} \in \mc E$ with $\{a,b\} \sim \{a',b'\}$ and $\{a,b\} \neq \{a',b'\}$, we have
    \[
    h(a') + h(b') < h(a) + h(b).
    \]
    \item \label{cond2} For every signed circuit $(C_+,C_-)$ of $\mc A$, we have
    \[
    \binom{C_+}{2} \not\subset \mc E' \quad \text{or} \quad \binom{C_-}{2} \not\subset \mc E'.
    \]
\end{enumerate}
\end{thm}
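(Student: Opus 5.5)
The plan is to pass between quadratic triangulations and the data $(\mc E',h)$ through Proposition~\ref{induced}, Corollary~\ref{rint} and Theorem~\ref{groebner}. The guiding idea is that for a quadratic triangulation, $\mc E'$ should be exactly the set of edges of the triangulation.

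\emph{($\Rightarrow$)} Let $\mc T$ be a quadratic triangulation, induced by some $h$. Fix a class of $\sim$, and let $x=(a+b)/2$ for a representative $\{a,b\}$.
\begin{itemize}
\item Since $\mc T$ covers $\conv\mc A$, the point $x$ lies in $\rint\sigma$ for some $\sigma\in\mc T$.
\item Because $\sigma$ is unimodular and $2x$ is a lattice point, the barycentric coordinates of $x$ in $\sigma$ lie in $\{0,\tfrac12,1\}$.
\item $x$ is not a vertex: its coordinates are not all in $\{0,1\}$, since $a\neq b$ are $0/1$ vectors. So $x$ is the midpoint of an edge $\{a',b'\}$ of $\mc T$. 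This edge lies in the class, and it is the only edge of $\mc T$ in the class because relative interiors of distinct faces are disjoint.
\end{itemize}
Thus the edges of $\mc T$ form a transversal $\mc E'$.

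Condition~\ref{cond1} follows from Proposition~\ref{induced}(a)$\Rightarrow$(b) applied to $\sigma=\{a',b'\}$ and $\tau=\{a,b\}$. Both contain $x$ in their relative interior, and $h(x,\cdot)$ is half the sum of the heights.

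For condition~\ref{cond2}, suppose both $\binom{C_+}{2}$ and $\binom{C_-}{2}$ lie in $\mc E'$. Then flagness makes $C_+$ and $C_-$ faces of $\mc T$. These faces are distinct, being disjoint nonempty sets. By Corollary~\ref{rint} their relative interiors meet, which contradicts the definition of a subdivision.

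\emph{($\Leftarrow$)} Condition~\ref{cond1} is a finite set of strict inequalities, so we may perturb $h$ to be generic and positive while keeping it. Let $\mc T$ be the regular triangulation induced by $h$, and let $\Delta$ be the clique complex of the graph $\mc E'$. The argument has four steps.

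\begin{enumerate}
\item \emph{Edges of $\mc T$ lie in $\mc E'$.} Let $\{a,b\}$ be an edge of $\mc T$ and $\{a',b'\}\in\mc E'$ its class representative. If they differed, condition~\ref{cond1} would give $h(x,\{a',b'\})<h(x,\{a,b\})$ at the common midpoint $x$. This contradicts Proposition~\ref{induced}(b) for the face $\{a,b\}$. Hence every simplex of $\mc T$ is a clique of $\mc E'$, so $\mc T\subseteq\Delta$.

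\item \emph{$\Delta$ has affinely independent faces with disjoint relative interiors.} If a clique $S$ were affinely dependent, it would contain a circuit $C$. Both $C_\pm$ are cliques, contradicting condition~\ref{cond2}. If two distinct cliques $S,T$ had meeting relative interiors, Corollary~\ref{rint} would give a signed circuit with $C_+\subset S$ and $C_-\subset T$, again contradicting condition~\ref{cond2}.

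\item \emph{$\Delta=\mc T$.} Let $S\in\Delta$. A point of $\rint S$ lies in the relative interior of some face $\sigma$ of $\mc T$, because $\mc T$ covers $\conv\mc A$. Since $\sigma\in\Delta$, step~2 forces $S=\sigma$. Therefore $\mc T$ is a clique complex, so it is regular and flag, and its edge set is exactly $\mc E'$.

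\item \emph{Unimodularity.} For a non-edge $\{a,b\}\notin\mc E'$ with representative $\{a',b'\}\in\mc E'$, the binomial $t_at_b-t_{a'}t_{b'}$ lies in $I_{\mc A}$. By condition~\ref{cond1} its leading term is $t_at_b$. Because $\mc T$ is flag, these monomials generate $I_{\mc T}$. So $I_{\mc T}\subseteq\ini_{\prec_h}I_{\mc A}\subseteq\sqrt{\ini_{\prec_h}I_{\mc A}}=I_{\mc T}$ by Theorem~\ref{groebner}. The initial ideal is therefore squarefree, and Theorem~\ref{groebner} gives that $\mc T$ is unimodular.
\end{enumerate}

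I expect the main obstacle to be steps~1 and~3 of the converse. Condition~\ref{cond1} only compares $\{a',b'\}$ against other pairs, not against arbitrary simplices through the midpoint. The plan avoids needing that stronger comparison by proving only the inclusion of $\mc T$'s edges into $\mc E'$. The reverse inclusion $\Delta\subseteq\mc T$ then comes from the covering property of $\mc T$ together with the disjointness of relative interiors supplied by condition~\ref{cond2}.
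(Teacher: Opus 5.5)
Your proposal is correct and follows essentially the same route as the paper. In the forward direction you show the edges of $\mc T$ form a transversal (using half-integral barycentric coordinates in a unimodular simplex) and derive the two conditions from Proposition~\ref{induced} and flagness; in the converse you show that the triangulation induced by $h$ coincides with the clique complex of $\mc E'$, and then Theorem~\ref{groebner} gives unimodularity. Your explicit check that every clique is affinely independent, and the remark that positivity of $h$ is obtained by adding a constant rather than by a small perturbation, only make precise points the paper leaves implicit.
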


\begin{proof}
Suppose $\mc A$ has a quadratic triangulation $\mc T$. Let $\mc E'$ be the set of edges of $\mc T$. We claim $\mc E'$ is a transversal of $\sim$. Note that two elements of $\mc E$ are equivalent if and only if they have the same midpoint. Thus, since $\mc T$ is a triangulation, $\mc T$ contains at most one representative from each equivalence class of $\sim$. To see that $\mc T$ contains at least one representative, let $\mc C$ be an equivalence class of $\sim$, and let $x = (a+b)/2$ for any $\{a,b\} \in \mc C$. Since $x \in \conv \mc A$, there is some $\sigma \in \mc T$ such that $x \in \rint \sigma$. Since $\mc T$ is unimodular, after an affine unimodular transformation we may assume $\sigma = \{0,e_1,\dots,e_k\}$ for some $k \ge 0$, so $x = \sum_{i=1}^k \lambda_i e_i$ where $0 < \lambda_i < 1$ and $\sum_{i=1}^k \lambda_i < 1$. Since $x$ was originally the midpoint of different elements of $\{0,1\}^d$, after transformation all its coordinates are in $\frac{1}{2} \bb Z$, and at least one of its coordinates is not an integer. It follows that $k = 1$ and $\lambda_1 = 1/2$. Thus $\sigma$ is an edge and $x$ is the midpoint, so $\sigma \in \mc C$, as desired.

Now, let $h : \mc A \to \bb R$ be a function which induces $\mc T$. We claim $\mc E'$ and $h$ satisfy conditions \ref{cond1} and \ref{cond2}. Condition \ref{cond1} holds by applying Proposition~\ref{induced}(b) to the midpoint of any element of $\mc E'$. For condition \ref{cond2}, let $(C_+,C_-)$ be a circuit of $\mc A$. Since the relative interiors of $C_+$ and $C_-$ intersect, we must have either $C_+ \not\subset \mc T$ or $C_- \not\subset \mc T$. Since $\mc T$ is flag, it follows that either $\binom{C_+}{2} \not\subset \mc E'$ or $\binom{C_-}{2} \not\subset \mc E'$, as desired.

Conversely, suppose $\mc E'$ is a transversal of $\sim$ and $h : \mc A \to \bb R$ a function satisfying conditions \ref{cond1} and \ref{cond2}. Perturbing $h$ by a small amount and adding a constant to $h$ does not affect these conditions. Thus, we may assume $h$ induces a triangulation $\mc T$, as well as a monomial order $\prec_h$ on $R_{\mc A}$. Let $\mc T'$ be the clique complex of $\mc E'$; that is, the set of all sets $S \subset \mc A$ such that $\binom{S}{2} \subset \mc E'$. By Corollary~\ref{rint} and condition \ref{cond2}, the relative interiors of the elements of $\mc T'$ are pairwise disjoint. Now, by condition \ref{cond1}, every edge of $\mc T$ must be in $\mc E'$. Therefore, $\mc T \subset \mc T'$. But $\mc T$ is a triangulation, so the relative interiors of its elements cover all of $\conv \mc A$. Thus $\mc T' \setminus \mc T$ must be empty, so $\mc T = \mc T'$.

Thus, by Theorem~\ref{groebner}, we have
\[
\ini_{\prec_h} I_{\mc A} \subset \sqrt{\ini_{\prec_h} I_{\mc A}} = I_{\mc T'} = \langle t_a t_b : \{a,b\} \in \mc E \setminus \mc E' \rangle.
\]
In the last equality, we have used the fact that $\mc A$ is in convex position, so all non-faces of $\mc T'$ have cardinality at least 2. On the other hand, by condition \ref{cond1}, we have $t_a t_b \in \ini_{\prec_h} I_{\mc A}$ for all $\{a,b\} \in \mc E \setminus \mc E'$. Thus $\ini_{\prec_h} I_{\mc A} = \langle t_a t_b : \{a,b\} \in \mc E \setminus \mc E' \rangle$. It follows that $h$ induces a quadratic triangulation of $\mc A$, as desired.
\end{proof}

\begin{rem}
Theorem~\ref{thm:main conditions} can be extended to general finite subsets of $\bb Z^d$ by adding conditions that enforce that every lattice point in $\conv(\mc A)$ appears in the triangulation. (For example, by enforcing the appropriate inequality on circuits where one part has size one.)
\end{rem}

\section{Computations}
\subsection{No quadratic triangulation for the Fano matroid}\label{subsec:noQTFano}

Our main method is to turn the conditions from Theorem \ref{thm:main conditions} into formal statements and to prove that these statements cannot be satisfied. This is achieved in the following steps.

\paragraph{Equivalence classes}
As the matroid polytope $P(F_7)$ of the Fano matroid has $28$ vertices, the set $\mc E$ has cardinality $\binom{28}{2} = 378$. 

\begin{prop}
The set $\mc E$ is subdivided into the following equivalence classes:
\begin{enumerate}
\item $126$ classes of cardinality $1$, corresponding to edges of $P(F_7)$ or to symmetric basis exchanges---pairs of bases of the matroid that differ by one element; 
\item $105$ classes of cardinality $2$ corresponding to pairs of bases that share one element. There are $7$ possibilities to choose the shared element and $\binom{6}{4}=15$ possibilities to choose the remaining $4$ elements for the two bases;
\item $7$ classes of cardinality $6$ corresponding to pairs of disjoint bases.
\end{enumerate}
\end{prop}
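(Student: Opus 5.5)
Two pairs $\{e_B,e_{B'}\}$ and $\{e_{D},e_{D'}\}$ are equivalent exactly when $e_B+e_{B'}=e_D+e_{D'}$, i.e.\ when they have the same intersection $I=B\cap B'=D\cap D'$ and the same symmetric-difference-union $U=B\cup B' = D\cup D'$, with $\{D\setminus I, D'\setminus I\}$ another bipartition of $U\setminus I$ into two parts giving bases. So the plan is to classify unordered pairs of distinct bases $\{B,B'\}$ of $F_7$ (rank 3, bases are the $\binom73-7=28$ non-lines) by $k=|B\cap B'|\in\{0,1,2\}$, and in each case count the ways to split $U\setminus I$ into two halves $X,Y$ with $I\cup X$ and $I\cup Y$ both bases. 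The class sizes will come out constant within each $k$, and the counts are checked by $126\cdot1+105\cdot2+7\cdot6=378$.

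\emph{Case $k=2$.} Here $U\setminus I$ has two elements and the only split is the given one, so each class is a singleton. Count the pairs: each base $B$ and each of its $3$ two-subsets $I$ extends to a base by any of the $4$ other elements except the third point of the line through $I$, giving $3$ other bases; so the number of pairs is $28\cdot 3\cdot 3/2=126$.

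\emph{Case $k=1$.} Fix the shared element $s$ and the $4$-set $W=U\setminus\{s\}\subset E\setminus\{s\}$, which can be chosen in $7\cdot15$ ways. The three lines through $s$ partition $E\setminus\{s\}$ into three pairs, and $\{s,x,y\}$ is a base iff $\{x,y\}$ is not one of these pairs. Among the $3$ splittings of $W$ into two 2-sets, I will check how many have both halves avoiding the three pairs. $W$ misses two of the six elements. If the missing two form a pair, $W$ is a union of two pairs and exactly $2$ splittings are good. If they do not, $W$ contains exactly one pair, leaving $2$ good splittings. So every such $(s,W)$ yields exactly one class of size $2$, giving $105$ classes. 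To make sure the case analysis is right, I will also count pairs: $105\cdot 2=210$ unordered pairs.

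\emph{Case $k=0$.} Here $U$ is a $6$-set, namely $E\setminus\{p\}$ for one of $7$ choices of $p$. There are $\binom63/2=10$ splittings into two triples, and I need the number where neither triple is a line. The lines avoiding $p$ are the $4$ lines not through $p$, and the complement in $E\setminus\{p\}$ of such a line is never a line. Hence $4$ splittings are bad and $6$ are good, giving $7$ classes of size $6$, or $42$ pairs.

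Check: $126+210+42=378=\binom{28}{2}$. The only real obstacle is the $k=1$ and $k=0$ local structure claims, namely the pair partition through $s$ and the fact that a line's complement is not a line. I will verify these directly from the model $\bb F_2^3\setminus\{0\}$: the complement of a line there is an affine plane minus $0$-free structure, containing no line.
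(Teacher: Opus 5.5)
Your proposal is correct and follows essentially the same route as the paper's sketch: you classify pairs of bases by $|B\cap B'|$ and count the admissible splittings of the symmetric difference, while also supplying details the paper leaves implicit (the edge count $126$, the two-case check that exactly two of the three splittings are good, and the $10-4=6$ count). Your last sentence is garbled; the fact you need there is simply that any two lines of the Fano plane meet, so the complement of a line in $E\setminus\{p\}$ cannot be a line.
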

\begin{proof}
    The statements are easy to check directly and we only sketch the proof. The number of edges of $P(F_7)$ is known. As the midpoint of each edge is not a midpoint of another pair of vertices, they have to form singleton classes. For the second point, choose $x\in [7]$ and a four element subset $A\subset [7]\setminus\{x\}$. There are three partitions of $A$ as a disjoint union of two element sets. However, only two of those partitions give bases of $F_7$ when considered together with $x$. These two clearly belong to the same equivalence class.

    Finally, for the last point, we have seven possibilities of choosing a subset $A\subset [7]$ of cardinality $6$. There are $6$ ways of presenting $A$ as a disjoint union of two bases. 
\end{proof}

\paragraph{Circuits}\label{subsec:circuits}
We compute all signed circuits of (the vertices of) $P(F_7)$. There are $181734$ circuits. The results are obtained independently using TOPCOM \cite{TOPCOM-preview} and Macaulay2 \cite{Macaulay2, FourTiTwoM2}. The file containing all circuits is available on \url{https://github.com/gaku-liu/quadratic_triangulations/blob/main/circuitsFano.txt}. We note that small circuits $(S_+,S_-)$ with $|S_+|=|S_-|=2$ correspond to elements equivalent in $\mc E$, as they encode a relation among the vertices of type $a+b=c+d$. 

\paragraph{Formal statements}
Our next aim is to express the conditions we have as formal statements. For this we declare two types of variables:
\begin{enumerate}
    \item $378$ Boolean variables $x_{i,j}$ indexed by the elements of the set $\mc E$, i.e.~pairs of vertices of $P(F_7)$ and
    \item $28$ Real variables $h_j$ indexed by the vertices of $P(F_7)$.
\end{enumerate}

The boolean variables $x_{i,j}$ represent the transversal, i.e.~$x_{i,j}$ is True if $\{i,j\}$ is in the transversal and False otherwise. The variables $h_j$ represent the function $h$. Statement $(1)$ from Theorem \ref{thm:main conditions} becomes:
\begin{itemize}
    \item For each equivalence class pick two distinct elements $\{a,b\},\{a',b'\}$ in that class. We assert: If $x_{a',b'}$, then $h_{a'}+h_{b'}<h_a+h_b$. 
\end{itemize}
Statement $(2)$ becomes:
\begin{itemize}
    \item For every circuit $(C_+,C_-)$ let $S=\binom{C_+}{2}\cup\binom{C_-}{2}$ be the set of pairs $\{i,j\}$ such that $i,j\in C_+$ or $i,j\in C_-$. We assert $\bigvee_{\{i,j\}\in S} \neg x_{i,j}$.
\end{itemize}

We further would like to obtain a transversal, that is we want  to choose exactly one representative from each equivalence class. The conditions (1) already imply that each equivalence class has at most one representative, so we need only to enforce that each class has at least one:
\begin{itemize}
    \item For every equivalence class $S$ on $\mc E$, we assert $\bigvee_{\{i,j\}\in S} x_{i,j}$.
\end{itemize}

\begin{rem}
While our Theorem \ref{thm:main conditions} gives an if and only if condition, we observed that adding statements that are consequences of having a quadratic triangulation may make the solver run faster. Here are examples of such consequences:
\begin{itemize}
    \item For every equivalence class $S$ on $\mc E$, we assert $\bigvee_{e_1\neq e_2\in S}\neg (x_{e_1}\land x_{e_2})$.
    \item For any minimal nonunimodular simplex, let $S$ be the set of its edges. As a quadratic triangulation cannot contain a nonunimodular simplex, we assert $\bigvee_{\{i,j\}\in S}\neg x_{i,j}$.
\end{itemize}
\end{rem}

\paragraph{Satisfiability Modulo Theories: cvc5}
Finally, we may put the formal statements into an SMT solver, for which we use cvc5  \cite{cvc5}.

We used Python to produce a script in the SMT-LIB language, which was then fed into cvc5. The Python code is available at \url{https://github.com/gaku-liu/quadratic_triangulations/blob/main/to_smt_general.py} and \url{https://github.com/gaku-liu/quadratic_triangulations/blob/main/triangulation_tools.py}.

The proof of unsatisfiability, without parallelization, takes about fifteen minutes on a 2024 M4 Mac mini with 16GB RAM. The resulting output file containing the proof has over 1 GB. This finishes the proof of Theorem \ref{thm: main fano}.


\section{Towards a characterization of matroids with quadratic triangulations: 8 element matroids}\label{subsec: 8 elem}

Let $\mathcal{Q}$ be the class of all matroids whose matroid base polytope admits a quadratic triangulation.  It is natural to ask whether there exists a characterization of $\mathcal{Q}$.  We begin with a simple lemma.

\begin{lem}
The class $\mathcal{Q}$ is closed under taking minors and duals.
\end{lem}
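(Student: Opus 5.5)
Duality is handled by an explicit lattice map. The map $x \mapsto \mathbf{1}-x$ on $\bb R^E$ sends $e_B$ to $e_{E\setminus B}$, so it carries $P(M)$ bijectively onto $P(M^*)$. It is an affine lattice isomorphism of $\bb Z^E$. Affine lattice isomorphisms preserve simplices, unimodularity, the face structure of a triangulation (hence flagness), and regularity, since a height function $h$ pulls back along the map. So any quadratic triangulation of $P(M)$ transports to one of $P(M^*)$.

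For minors it suffices, by duality, to treat deletion $M\setminus e$; contraction follows because $M/e=(M^*\setminus e)^*$. The case where $e$ is a coloop is even simpler, since then $M\setminus e=M/e$. If $e$ is a coloop, then $P(M\setminus e)$ is affinely lattice-isomorphic to $P(M)$: drop the coordinate $e$, which is constantly $1$. If $e$ is not a coloop, then $P(M\setminus e)$ is the set of vertices of $P(M)$ with $x_e=0$. This set is $P(M)\cap\{x_e=0\}$, and $\{x_e=0\}$ is a supporting hyperplane because $x_e\ge 0$ on $\conv P(M)$. So $P(M\setminus e)$ is the vertex set of a face $F$ of $\conv P(M)$.

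The key step is that a quadratic triangulation $\mc T$ restricts to a quadratic triangulation of any face. Let $\mc T_F=\{\sigma\in\mc T:\sigma\subset F\}$.

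\textbf{Triangulation.} Every point of $\conv(F)$ lies in the relative interior of some $\sigma\in\mc T$. Since $\conv F$ is a face of $\conv\mc A$, every vertex of $\sigma$ must lie in $F$. Hence the simplices of $\mc T_F$ cover $\conv F$. The intersection and face-closure properties are inherited from $\mc T$.

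\textbf{Regularity.} If $h$ induces $\mc T$, then $h|_F$ induces $\mc T_F$. Indeed, for $\sigma\subset F$ and $x\in\rint\sigma$, any simplex $\tau$ with $x\in\rint\tau$ lies in $F$ by the same face argument. So the simplices competing in Proposition~\ref{induced}(b)/(c) are identical for $\mc A$ and for $F$, and the criterion transfers in both directions.

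\textbf{Unimodularity.} Each simplex of $\mc T_F$ is a simplex of $\mc T$, hence unimodular.

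\textbf{Flagness.} Suppose $S\subset F$ is a non-face of $\mc T_F$ with $|S|>2$. Then $S\notin\mc T$, so by flagness of $\mc T$ some pair $S'\subset S$ is a non-face of $\mc T$. Since $S'\subset F$, it is also a non-face of $\mc T_F$.

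Alternatively, Theorem~\ref{thm:main conditions} gives the restriction directly: take $\mc E'\cap\binom{F}{2}$ and $h|_F$. Equivalence classes of pairs in $F$ are full classes of $\mc A$, since the midpoint lies in $\conv F$ and so both endpoints lie in $F$. Circuits of $F$ are circuits of $\mc A$. So conditions (1) and (2) restrict.

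The only point needing care is the face argument, namely that a point of $\conv F$ expressed in the relative interior of a simplex forces all of that simplex's vertices into $F$. This is the standard property of faces, so I expect no real obstacle.
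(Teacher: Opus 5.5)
Your proof is correct and takes essentially the paper's approach: minors correspond to faces of $P(M)$, and duality is the lattice isomorphism $x \mapsto \mathbf{1}-x$. The only differences are that you reduce contraction to deletion via duality, where the paper treats both kinds of minor directly as faces, and that you spell out why a quadratic triangulation restricts to a face, which the paper leaves as clear.
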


\begin{proof}
Given a matroid $M$, the base polytopes of minors of $M$ are distinguished faces of $P(M)$.  Additionally, $P(M^*)=-P(M)+{\bf 1}$.  Thus it is clear that if $M$ belongs to $\mathcal{Q}$, then so do all of its minors as well as its dual.  
\end{proof}

We have the following natural question.

\begin{ques}
Is the class $\mathcal{Q}$ determined by a finite, self-dual list of forbidden minors?
\end{ques}

Knowing that the Fano matroid and its dual do not admit quadratic Gr\"obner bases, but all other matroids on at most 7 elements do, we have the following strengthened version of the above question.

\begin{ques}\label{ques:excludedfanomino}
Is $\mathcal{Q}$ equal to the class of matroids which are $F_7$ and $F_7^*$ minor free?
\end{ques}

In hopes of gaining some insight into these questions, we set out to investigate matroids on 8 elements.  It is notable that there is a significant jump in complexity when passing from matroids on 7 elements to matroids on 8 elements; matroid base polytopes for 7 element matroids may have a few hundred thousand circuits while  matroid base polytopes for 8 element matroids may have a few billion.   Here we report on the progress we have made which can be summarized in the following result.

\begin{thm}\label{thm:8element}
Let $M$ be a matroid on at most 8 elements which has no $F_7$ or $F^*_7$ minor, and is not equal to the matroid $T_8$, then $P(M)$ admits a quadratic triangulation.
\end{thm}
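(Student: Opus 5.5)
The proof is computational, organized so that no 8-element matroid is handled by hand and each remaining one is certified by a quadratic triangulation. First I reduce the list of matroids. By the Lemma that $\mathcal{Q}$ is closed under minors and duals, it suffices to treat matroids $M$ on exactly 8 elements up to isomorphism and duality. Smaller matroids are minors of 8-element ones, or fall under the 7-element case of \cite{HayaseHibiKatsunoShibata2022}. I would also discard the cases that are disconnected, or that have loops or coloops. For a loop or coloop, $P(M)$ is lattice-isomorphic to $P(M\setminus e)$ or $P(M/e)$. For a direct sum, $P(M_1\oplus M_2)=P(M_1)\times P(M_2)$, and a product of polytopes with quadratic triangulations has one: take the staircase refinement with respect to a product of generic height functions, which is again regular, unimodular and flag. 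Next I would remove every class already known to have a quadratic Gr\"obner basis, such as base-sortable matroids, uniform matroids and lattice path matroids \cite{blum2001base,Schweig2011LatticePathMatroids}. Finally I would enumerate the remaining connected 8-element matroids, for instance from the database of small matroids in Sage, and discard those containing an $F_7$ or $F_7^*$ minor and $T_8$.

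For each remaining $M$ I would find a quadratic triangulation and verify it, rather than running the full SMT encoding of Theorem~\ref{thm:main conditions}. Enumerating all circuits, possibly billions, is infeasible here, but it is only needed to prove unsatisfiability. To produce a certificate I would search for a transversal $\mc E'$ together with heights $h$. The plan is to sample random or structured heights, for example lexicographic heights from orderings of the ground set, and sums of squares of linear functionals. I would compute the induced triangulation $\mc T$ with TOPCOM and check that it is unimodular and flag; when it is, $M\in\mathcal{Q}$. Whenever a candidate $\mc E'$ arises, condition~\ref{cond1} is a linear feasibility problem in $h$, which I would solve with an LP. Condition~\ref{cond2} is then equivalent to the clique complex $\mc T'$ of $\mc E'$ having pairwise disjoint relative interiors. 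As in the proof of Theorem~\ref{thm:main conditions}, once $h$ satisfies condition~\ref{cond1}, $\mc T\subseteq\mc T'$. So condition~\ref{cond2} reduces to checking that every clique of $\mc E'$ is a face of the regular triangulation induced by $h$. This check uses only the faces of $\mc T$, not all circuits, and is the quadratic triangulation verification step.

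The main obstacle is the search itself for the hardest matroids near the excluded minors, those with large polytopes and no obvious sorting structure. For these, random heights almost never give flag triangulations. My first attempt would be a local search: start from a regular unimodular triangulation, e.g. the one from \cite{BackmanLiu2025MatroidBasePolytope}, and push the heights to break the minimal non-faces of size $\ge 3$. Failing that, I would run a partial SMT encoding that contains only the circuits meeting a current candidate's clique complex, and add violated circuits lazily, CEGAR-style, until a consistent certificate appears. The statement leaves $T_8$ excepted precisely because this search is expected not to terminate there.
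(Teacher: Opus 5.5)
Your proposal is correct as a computational proof scheme. Its certification criterion is the paper's QTV step: take a transversal $\mc E'$ and an $h$ satisfying condition~\ref{cond1}, and check that every (maximal) clique of $\mc E'$ is a face of the triangulation induced by a generic perturbation of $h$. As you say, this certifies a quadratic triangulation without the full circuit list; the paper's full-circuit audit and Macaulay2 check are redundant confirmations.

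You differ from the paper in the reduction and in the search. The paper uses Shibata's theorem to pass to 3-connected components and Blum's base-sortability to discard ranks $0,1,2,6,7,8$. That leaves 478 matroids of rank 3 or 4 (rank 5 by duality). You reduce only to connected matroids, via loops/coloops and the staircase refinement of products, which is valid. The lifting there is $h_1(a)+h_2(b)$ plus a small staircase perturbation such as $-\epsilon\, i(a)j(b)$ for total orders on the vertices, not a ``product'' of heights. Because you stop at connected matroids, you must also certify every 2-sum computationally: still correct, but a larger list that Shibata's result would remove.

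For the search, the paper runs cvc5 from the start with all of condition~\ref{cond1} and with condition~\ref{cond2} restricted to circuits of size 4, 5, 6 from precomputed circuit lists of $\Delta(8,3)$ and $\Delta(8,4)$. It enlarges the circuit set by fixed increments when QTV fails, and all but one matroid passed after a single iteration. Your random/structured heights and local search from the Backman--Liu triangulation may handle easy cases cheaply. They are very unlikely to succeed on $AG(2,3)\setminus e$, its $Y$-$\Delta$ transform, or the matroid one basis away from $T_8$, which took hours of SMT time even in the paper. So your CEGAR fallback carries the proof in the hard cases, and it is essentially the paper's pipeline.

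The fallback has one genuine advantage: violated circuits can be produced on demand from a QTV failure. A clique $S\notin\mc T$ has $\rint S$ meeting $\rint\tau$ for some $\tau\in\mc T\subseteq\mc T'$, and Corollary~\ref{rint} then gives a circuit whose two sides are both cliques. This avoids enumerating billions of hypersimplex circuits. What the paper's small-circuit seeding buys in exchange is empirical evidence that the loop converges almost immediately.
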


The reason for excluding $T_8$ in Theorem \ref{thm:8element} is that we have been unable to ascertain whether it has a quadratic triangulation.  So, Theorem \ref{thm:8element} does not provide a negative answer to Question \ref{ques:excludedfanomino}.   Below we outline our computations which were performed on the Vermont Advanced Computing Cluster (VACC).  

\paragraph{The matroids considered} By Shibata \cite{Shibata2016}, to understand whether a given matroid $M$ belongs to $\mathcal{Q}$, it suffices to decompose $M$ into 3-connected components and check these 3-connected components individually.  
By Blum \cite{blum2001base}, we know that all base sortable matroids admit quadratic triangulations.  In particular, this implies that we are not interested in ranks 0, 1, 2, 6, 7, or 8 as these are all base sortable.  
Thus, we first generated a Sage script to calculate the list of all 3-connected matroids on 8 elements which have no $F_7$ or $F^*_7$ minor, are not base sortable, and have rank 3 or 4 (with rank 5 being subsumed by rank 3 via duality).  There are 478 such matroids; 43 of rank 3 and 435 of rank 4.\footnote{This calculation was independently verified by a separate script.}    Let this collection of $478$ matroids be denoted by $\mathcal{R}$.

 

\paragraph{Circuit preprocessing} We first recalculated the full lists of circuits for the hypersimplices $\Delta(8,3)$ and $\Delta(8,4)$ up to symmetry using TOPCOM 1.2.0.c.; these were previously calculated by Rambau \cite{RambauSymLexSubsetRS2026}.  Afterwards, we found the full list of circuits by taking the orbits of the representatives TOPCOM provided. 
The hypersimplex $\Delta(8,3)$ has 251,651,820 circuits which were stored in a 7GB file, and the hypersimplex $\Delta(8,4)$ has 3,134,451,775 circuits which were stored in a 90GB file.  

 \paragraph{The SMT program}For a given matroid $M$, the pipeline $\mathcal{X}$ computes a subset of the circuits of $P(M)$ by restricting to circuits of sizes 4, 5, and 6 from the relevant hypersimplex $\Delta(8,3)$ or $\Delta(8,4)$.  Let the circuits produced be denoted $\mathcal{S}(M)$.\footnote{This totals only about 0.1\% of all circuits for a given matroid!} 
 Then $\mathcal{X}$ streams these circuits $\mathcal{S}(M)$ directly to cvc5 which determines whether the full minimum-edge conditions \ref{cond1} from Theorem \ref{thm:main conditions} together with the circuit conditions \ref{cond2} restricted to $\mathcal{S}(M)$ are satisfiable.  If cvc5 produces UNSAT it terminates and we conclude that $P(M)$ has no quadratic triangulation (this never happened in our cases).  If cvc5 finds a SAT witness $(h,E)$, that gets passed to the quadratic triangulation verification (QTV) step.

\paragraph{Quadratic triangulation verification step}  
Let $(h,E)$ be the SAT witness pair produced by cvc5 for the relaxed program. The QTV stage first perturbs $h$, call this $h^*$, and the checks whether $h^*$ produces a quadratic triangulation with edge set $E$.  In the process it checks whether the facets of the clique complex of $E$ belong to the lower hull of $h^*$, but it does not directly calculate the full lower convex hull of $h^*$.  If $h^*$ does not produce a quadratic triangulation with edge set $E$, we go back to the circuit generation step, increase the partial set of circuits by some predetermined increment, and repeat.  We note that all but one of the matroids passed the cvc5 and QTV steps after a single iteration. This was quite surprising and we have no theoretical justification for this phenomenon.

\paragraph{Full circuit audit} If the QTV step passes, we are done and may conclude that $P(M)$ has a quadratic triangulation induced by the height function $h$ with edge set $E$.  However, we added a second verification stage, called the full-circuit audit, to ensure that the witnesses produced are valid.  This takes the output edge set and directly verifies that it satisfies the full set of circuit constraints for $P(M)$ in condition \ref{cond2} from Theorem \ref{thm:main conditions}.  We did this as a second campaign which ran as a parallel computation: 8 CPUs for each rank 3 matroid, and 16 CPUs for each rank 4 matroid.  \footnote{In fact, a third check was performed by having Macaualay2 calculate the toric ideals $I_M$ for $M \in \mathcal{R}$ and verify directly that the height functions $h^*$ above give a term order such that the associated Gr\"obner basis is quadratic.}

\paragraph{Main results}
In total, 477 of the 478 matroids in $\mathcal{R}$ successfully terminated with a SAT witness which passed the QTV step and the full circuit audit; the one matroid which did not terminate was $T_8$.  For the main SAT + QTV pipeline, the median rank 3 matroid took less than 1 minute and the median rank 4 matroid took less than 9 minutes.  The rank 3 and 4 matroids used about 150 MB and 500 MB of RAM, respectively.

There were a very small number of matroids which were considerably harder than the others to verify.  The rank 3 matroid $AG(2,3)\setminus e$ took about 2.5 hours to finish.   There were two matroids of rank 4, the $Y$-$\Delta$ transform of $AG(2,3)\setminus e$, and an unnamed matroid which is one basis away from $T_8$, which took 20 and 30 hours to finish, respectively.  We have not been able to determine whether the matroid polytope of $T_8$ has a quadratic triangulation.  We ran several week-long jobs to test this case and they all timed out.  See \url{https://github.com/gaku-liu/quadratic_triangulations/blob/main/witnesses_8elt_matroids.zip} for further details.

\section{Open problems}

We recall that the standard graded algebra $S=R/I$, where $R=k[x_1,\dots,x_n]$ is Koszul if and only if $k=R/(x_1,\dots,x_n)$ has a linear resolution as an $S$-module.

Let $I_M\subset R$ be the toric ideal of a matroid $M$. The following open problem was posed by Herzog and Hibi \cite{herzog2002discrete}.
\begin{open}\label{open:Koszul}
    Is every base ring $R/I_M$ Koszul?
\end{open}
It is well-known that a positive answer to this open problem implies that $I_M$ is quadratically generated. On the other hand, a quadratic Gr\"obner basis for $I_M$ implies that $R/I_M$ is Koszul. It is thus natural to ask:
\begin{open}
    Is the base ring of the Fano matroid Koszul?
\end{open}
Given the toric ideal $I$ of the Fano matroid we computed first part of the resolution of $k$ as an $S$-algebra. This part remained linear:

\[
\cdots \longrightarrow S(-4)^{133343} \longrightarrow S(-3)^{8463} \longrightarrow S(-2)^{518} \longrightarrow S(-1)^{28} \longrightarrow S \longrightarrow k \longrightarrow 0
\]
These computations suggest a positive answer to the above question.

We propose the following problems based on Subsection \ref{subsec: 8 elem}.  

\begin{open} \label{T8}
Determine whether the matroid $T_8$ admits a quadratic triangulation.
\end{open}

\begin{open}
Determine whether the matroid $AG(2,3)$ admits a quadratic triangulation.
\end{open}

Our data suggests a rather vague but noticeable connection to representability theory: $F_7$, $AG(2,3)\setminus e$, the $Y$-$\Delta$ transform of $AG(2,3)\setminus e$, $T_8$ and their duals belong to the forbidden minors for the so-called dyadic matroids.  These are the matroids which are representable over the dyadic partial field, equivalently they are representable over all fields of odd characteristic (see \cite{BakerBowler2019, BrettellPendavingh2024, SempleWhittle1996}).  
In comparison, $F_7$ does not have a quadratic triangulation. The matroids $AG(2,3)\setminus e$ and the $Y$-$\Delta$ transform of $AG(2,3)\setminus e$ do have quadratic triangulations, but they were hard to find and so one might expect that there are very few.

\begin{conj}\label{conj:dyadic}
The matroid base polytope of any dyadic matroid has a quadratic triangulation.   
\end{conj}

Note that there are matroids which are not dyadic, but admit quadratic triangulations, so Conjecture \ref{conj:dyadic} does not suggest a characterization of $\mathcal{Q}$.

Theorem $3.2$ of \cite{Lason2021FixedRankMatroidToricIdeals} asserts that the toric ideal of a matroid of rank $r$ has a Gröbner basis of degree at most $(r+3)!$. Since the Fano matroid witnesses that degree 2 is not a universal bound, we ask the following.

\begin{open}
Does the toric ideal of every matroid admit a Gr\"{o}bner basis whose degree is bounded by a universal constant, independent of the matroid?
\end{open}

\bibliographystyle{plain}
\bibliography{references}

\end{document}